\begin{document}

\title{\parbox{\linewidth}{\footnotesize\noindent }Canonical $1$-form
associated with a Lie-Rinehart structures on weil bundles}
\author{Olivier MABIALA\ MIKANOU\thanks{{\footnotesize stive.elg@gmail.com}}%
, Basile Guy Richard BOSSOTO\thanks{{\footnotesize bossotob@yahoo.fr}}  \and
Marien NGOUABI University, Faculty of Science and Technology, BP : 69,
Brazzaville, Congo. }
\date{}
\maketitle

\begin{abstract}
In this paper, we denote by $\mathbb{A}$ a Weil algebra, $M$ a smooth
manifold and $M^{\mathbb{A}}$ the associated Weil bundle. We study the
properties of differential operators on $M^{\mathbb{A}}$ and construct the
canonical $1$-form when $M^{\mathbb{A}}$ is provided with a structure of
Lie-Rinehart algebra.
\end{abstract}

\textbf{Keywords:} Weil bundle, First order differential operator,
Lie-Rinehart algebra.

\textbf{Mathematics Subject classification}: 58A20, 58A32, 13N10.

\section{Preliminaries}

Throughout this text, all differentiable structures are assumed to be of
class $C^{\infty }$, therefore smooth. We denote by $M$ a paracompact smooth
manifold of dimension $n$, $C^{\infty }(M)$ the algebra of functions of
class $C^{\infty }$ on $M$ and $\mathfrak{X}(M)$, the $C^{\infty }(M)$%
-module of vector fields on $M$.

\subsection{First order differential operator}

When $R$ is a commutative algebra, with unit $1_{R}$ , and when $E$ is a $R$%
-module, a linear application 
\begin{equation*}
\delta :R\longrightarrow E
\end{equation*}%
is a first order differential operator if, for all $a$ and $b$ belonging to $%
R$,%
\begin{equation*}
\delta (ab)=\delta (a)\cdot b+a\cdot \delta (b)-ab\cdot \delta (1_{R})\text{.%
}
\end{equation*}%
When $\delta (1_{R})=0$, we have the usual notion of derivation from $R\ $\
into $E$.

Thus, a linear map%
\begin{equation*}
\delta :R\longrightarrow E
\end{equation*}%
is a first order differential operator if and only if, the map%
\begin{equation*}
R\longrightarrow E,a\mapsto \delta (a)-a\cdot \delta (1_{R}),
\end{equation*}%
is a derivation.

A first order differential operator from $R$ into $R$ is simply called first
order differential operator of $R$. We denote by $\mathcal{D}(R,E)$ the $R$%
-module of first order differential operators from $R$ into $E$ and by $%
\mathcal{D}(R)$ the $R$-module of first order differential operators of $R$.

If $A$ and $B$ are arbitrary algebras and if $\varphi :A\longrightarrow B$
is a homomorphism of algebras, then $B$ is a $A$-module.

Let $\varphi :A\longrightarrow B$ be a homomorphism of algebras. A linear
map $\delta :A\longrightarrow B$ is a $\varphi $-first order differential
operator if%
\begin{equation*}
\delta (ab)=\delta (a)\cdot \varphi (b)+\varphi (a)\cdot \delta (b)-\varphi
(ab)\cdot \delta (1_{A})\text{.}
\end{equation*}

When $M$ is a smooth manifold, a first order differential operator of the
algebra $C^{\infty }(M)$ will simply said differential operator on $M$ and
denoted by $\mathcal{D}(M)$ instead of $\mathcal{D}(C^{\infty }(M))$.

\subsection{Weil algebra and Weil bundle}

A Weil algebra or local algebra (in the sense of Andr\'{e} Weil) \cite{wei},
is a finite dimensional, associative, commutative and unitary algebra $%
\mathbb{A}$ over $\mathbb{R}$ in which there exists an unique maximum ideal $%
\mathfrak{m}$\ of codimension $1$. In his case the factor space $\mathbb{A}/%
\mathfrak{m}$ is one-dimensional and is identified with the algebra of real
numbers $\mathbb{R}$. Thus $\ \mathbb{A}=\mathbb{R\oplus }\mathfrak{m}$ \
and $\mathbb{R}$ is identified with $\mathbb{R}\cdot 1_{\mathbb{A}}$, where $%
1_{\mathbb{A}}$\ is the unit of $\mathbb{A}$. The integer $k\in 
\mathbb{N}
$ such that $\mathfrak{m}^{k+1}=(0)$ and $\mathfrak{m}^{k}\neq (0)$ is
called the height of $\mathbb{A}$.

A near point of $x\in M$ of kind $\mathbb{A}$ is a homomorphism of algebras%
\begin{equation*}
\xi :C^{\infty }(M)\longrightarrow \mathbb{A}
\end{equation*}%
such that for any $f\in C^{\infty }(M),$ $[\xi (f)-f(x)]\in \mathfrak{m}$.

We denote $M_{x}^{\mathbb{A}}$ the set of near points of $x$ of kind $%
\mathbb{A}$ and $M^{\mathbb{A}}=\underset{x\in M}{\cup }M_{x}^{\mathbb{A}}$
the set of near points on $M$ of kind $\mathbb{A}$ : it is smooth manifold
of dimension $n\times \dim \mathbb{A}$. The manifold $M^{\mathbb{A}}$ is
called the manifold of infinitely near points on $M$ of kind $\mathbb{A}$ 
\cite{wei}, \cite{mor} or simply the Weil bundle\ \cite{kms}.

If $(U,\varphi )$ is a local chart of $M$ with local coordinates $%
(x_{1},x_{2},...,x_{n})$, the application,%
\begin{equation*}
U^{A}\longrightarrow \mathbb{A}^{n},\xi \longmapsto (\xi (x_{1}),\xi
(x_{2}),...,\xi (x_{n})),
\end{equation*}%
is a bijection from $U^{\mathbb{A}}$ into an open of $\mathbb{A}^{n}$. The
manifold $M^{\mathbb{A}}$ is modeled on $\mathbb{A}^{n}$, i.e $M^{\mathbb{A}%
} $ is a $\mathbb{A}$-manifold of dimension $n$ \cite{bo},\cite{shu},\cite%
{shu1}.

The set, $C^{\infty }(M^{\mathbb{A}},\mathbb{A})$, of smooth functions on $%
M^{\mathbb{A}}$ with values in $\mathbb{A}$ is a associative and commutative
algebra with unit, over $\mathbb{A}$ and for any $f\in C^{\infty }(M)$, the
application%
\begin{equation*}
f^{\mathbb{A}}:M^{\mathbb{A}}\longrightarrow \mathbb{A},\xi \longmapsto \xi
(f)
\end{equation*}%
is differentiable. Futher the application%
\begin{equation*}
C^{\infty }(M)\longrightarrow C^{\infty }(M^{\mathbb{A}},\mathbb{A}%
),f\longmapsto f^{\mathbb{A}},
\end{equation*}%
is a injective homomorphism of algebras and we have:%
\begin{equation*}
(f+g)^{\mathbb{A}}=f^{\mathbb{A}}+g^{\mathbb{A}};\text{ \ \ \ }(\lambda
\cdot f)^{\mathbb{A}}=\lambda \cdot f^{\mathbb{A}};\text{ \ \ \ }(f\cdot g)^{%
\mathbb{A}}=f^{\mathbb{A}}\cdot g^{\mathbb{A}}
\end{equation*}%
with $\lambda \in \mathbb{R}$, $f$ and $g$ belonging to $C^{\infty }(M)$.

\bigskip

In \cite{bo}, we have shown that there is an equivalence between the
following statements:

\begin{enumerate}
\item A vector field on $M^{\mathbb{A}}$ is a differentiable section of the
tangent bundle $(TM^{\mathbb{A}},\pi _{M^{\mathbb{A}}},M^{\mathbb{A}})$;

\item A vector field on $M^{\mathbb{A}}$ is a derivation of $C^{\infty }(M^{%
\mathbb{A}})$;

\item A vector field on $M^{\mathbb{A}}$ is a linear map $X:C^{\infty
}(M)\longrightarrow C^{\infty }(M^{\mathbb{A}},\mathbb{A})$ such that 
\begin{equation*}
X(f\cdot g)=X(f)\cdot g^{\mathbb{A}}+f^{\mathbb{A}}\cdot X(g)\text{, for any}%
\,f,g\in C^{\infty }(M)\text{;}
\end{equation*}
\end{enumerate}

and then, in \cite{nbo}, we have established that these statements are
equivalent to the assertion

\begin{enumerate}
\item[4.] A vector field on $M^{\mathbb{A}}$ is a derivation of $C^{\infty
}(M^{\mathbb{A}},\mathbb{A)}$ which is $\mathbb{A}$-linear.
\end{enumerate}

Thus the set $\mathfrak{X}(M^{\mathbb{A}})$ of vector fields on $M^{\mathbb{A%
}}$ is a $C^{\infty }(M^{\mathbb{A}},\mathbb{A})$-module.

Furthermore, we have \cite{nbo}

\begin{theorem}
The map%
\begin{equation*}
\mathfrak{X}(M^{\mathbb{A}})\times \mathfrak{X}(M^{\mathbb{A}%
})\longrightarrow \mathfrak{X}(M^{\mathbb{A}}),(X,Y)\longmapsto \lbrack
X,Y]=X\circ Y-Y\circ X
\end{equation*}%
is skew-symmetric $\mathbb{A}$-bilinear and defines a structure of $\mathbb{A%
}$-Lie algebra over $\mathfrak{X}(M^{\mathbb{A}})$. Moreover, for any $%
\varphi \in C^{\infty }(M^{\mathbb{A}},\mathbb{A})$, we have%
\begin{equation*}
\lbrack X,\varphi Y]=X\left( \varphi \right) Y+\varphi \lbrack X,Y]\text{.}
\end{equation*}
\end{theorem}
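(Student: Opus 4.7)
The plan is to use characterization (4) of vector fields on $M^{\mathbb{A}}$: these are precisely the $\mathbb{A}$-linear derivations of the $\mathbb{A}$-algebra $C^{\infty}(M^{\mathbb{A}},\mathbb{A})$. Once everything is translated into this derivation picture, the proof reduces to the classical commutator manipulations, but the presence of the base algebra $\mathbb{A}$ requires a bit of care.

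First I would check that $[X,Y]=X\circ Y-Y\circ X$ actually lies in $\mathfrak{X}(M^{\mathbb{A}})$. For $f,g\in C^{\infty}(M^{\mathbb{A}},\mathbb{A})$, I would expand $X(Y(fg))$ and $Y(X(fg))$ with the Leibniz rule for $X$ and $Y$; the two second-order products $X(f)Y(g)+Y(f)X(g)$ appear symmetrically and cancel, leaving exactly $[X,Y](f)\cdot g+f\cdot[X,Y](g)$. For the $\mathbb{A}$-linearity, given $\alpha\in\mathbb{A}$, the $\mathbb{A}$-linearity of $X$ and $Y$ gives $X(Y(\alpha f))=\alpha X(Y(f))$ and similarly for $Y\circ X$, so $[X,Y](\alpha f)=\alpha[X,Y](f)$. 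Hence $[X,Y]$ is an $\mathbb{A}$-linear derivation, i.e.\ a vector field.

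Next I would establish the algebraic properties. Skew-symmetry is immediate from the definition. For $\mathbb{A}$-bilinearity, the key point is that scalars in $\mathbb{A}$ commute with the operators $X,Y$ in the sense that $(\alpha X)\circ Y-Y\circ(\alpha X)=\alpha(X\circ Y-Y\circ X)$ because $Y$ is $\mathbb{A}$-linear (so $Y(\alpha X(f))=\alpha Y(X(f))$); this is precisely where we use statement (4) rather than a weaker $\mathbb{R}$-linear derivation property. The Jacobi identity is the standard associativity calculation: writing out $[[X,Y],Z]+[[Y,Z],X]+[[Z,X],Y]$ as a sum of six triple compositions, each term appears twice with opposite signs and cancels.

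For the last identity, I would simply apply $[X,\varphi Y]$ to an arbitrary $f\in C^{\infty}(M^{\mathbb{A}},\mathbb{A})$: using that $\varphi Y$ acts by $(\varphi Y)(f)=\varphi\cdot Y(f)$ and that $X$ is a derivation,
\begin{equation*}
[X,\varphi Y](f)=X\bigl(\varphi\cdot Y(f)\bigr)-\varphi\cdot Y\bigl(X(f)\bigr)=X(\varphi)\cdot Y(f)+\varphi\cdot X(Y(f))-\varphi\cdot Y(X(f)),
\end{equation*}
which is exactly $\bigl(X(\varphi)\,Y+\varphi\,[X,Y]\bigr)(f)$.

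The only real obstacle is the first step: verifying that the commutator of two $\mathbb{A}$-linear derivations of $C^{\infty}(M^{\mathbb{A}},\mathbb{A})$ is still an $\mathbb{A}$-linear derivation, and in particular not merely an $\mathbb{R}$-linear one. This is where the equivalence (3)$\Leftrightarrow$(4) recalled from \cite{nbo} is essential; without the $\mathbb{A}$-linearity of $X$ and $Y$, the scalar $\alpha\in\mathbb{A}$ would not pass through the composition and $\mathbb{A}$-bilinearity of the bracket would fail.
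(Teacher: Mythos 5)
Your proof is correct, and it follows the standard route that the paper itself uses: the paper does not actually prove this theorem (it is quoted from \cite{nbo}), but its proof of the analogous statement for $\mathcal{D}_{\mathbb{A}}(M^{\mathbb{A}})$ proceeds by exactly the same steps --- verify the Leibniz rule for $[X,Y]$ by cancellation of the cross terms, obtain $\mathbb{A}$-bilinearity from the $\mathbb{A}$-linearity of the operators, and evaluate $[X,\varphi Y]$ on a test function. Your observation that the $\mathbb{A}$-linearity of $Y$ (characterization (4)) is precisely what lets the scalar $\alpha\in\mathbb{A}$ pass through the composition is the one genuinely non-classical point, and you have identified and justified it correctly.
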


Recall that when $M$ is a smooth manifold, the basic algebra of $M$ is $%
C^{\infty }(M)$. Since $\mathfrak{X}(M^{\mathbb{A}})$ is a $C^{\infty }(M^{%
\mathbb{A}},\mathbb{A})$-module, and is a Lie algebra over $\mathbb{A}$, and
as $M^{\mathbb{A}}$ is a $\mathbb{A}$-manifold, this means that the basic
algebra of $M^{\mathbb{A}}$ is $C^{\infty }(M^{\mathbb{A}},\mathbb{A})$ and
not $C^{\infty }(M^{\mathbb{A}})$.

Thus, in all what follows, we denotes $\mathfrak{X}(M^{\mathbb{A}})$, the
set of $\mathbb{A}$-linear maps 
\begin{equation*}
X:C^{\infty }(M^{\mathbb{A}},\mathbb{A})\longrightarrow C^{\infty }(M^{%
\mathbb{A}},\mathbb{A})
\end{equation*}%
such that 
\begin{equation*}
X(\varphi \cdot \psi )=X(\varphi )\cdot \psi +\varphi \cdot X(\psi ),\quad 
\text{for any}\,\varphi ,\psi \in C^{\infty }(M^{\mathbb{A}},\mathbb{A})
\end{equation*}%
that is to say 
\begin{equation*}
\mathfrak{X}(M^{\mathbb{A}})=Der_{\mathbb{A}}[C^{\infty }(M^{\mathbb{A}},%
\mathbb{A})]\text{.}
\end{equation*}

\subsection{Differential form on $M^{\mathbb{A}}$}

Let $\mathcal{L}_{sks}^{p}\left[ \mathfrak{X}(M^{\mathbb{A}}),C^{\infty }(M^{%
\mathbb{A}},\mathbb{A})\right] =\Lambda _{\mathbb{A}}^{p}(M^{\mathbb{A}},%
\mathbb{A})$ be the $C^{\infty }(M^{\mathbb{A}},\mathbb{A})$-module of \
skew-symmetric $C^{\infty }(M^{\mathbb{A}},\mathbb{A})$-multilinear
applications for any $p\in \mathbb{N}$,%
\begin{equation*}
\omega :\underset{p\text{ times}}{\underbrace{\mathfrak{X}(M^{\mathbb{A}%
})\times \mathfrak{X}(M^{\mathbb{A}})\times ...\times \mathfrak{X}(M^{%
\mathbb{A}})}}\longrightarrow C^{\infty }(M^{\mathbb{A}},\mathbb{A})\text{.}
\end{equation*}

\bigskip\ We say that $\omega $ is a $A$-linear form of degree $p$ on $M^{%
\mathbb{A}}$.

We denote 
\begin{equation*}
\Lambda _{\mathbb{A}}(M^{\mathbb{A}})=\bigoplus\limits_{p=0}^{n}\Lambda _{%
\mathbb{A}}^{p}(M^{\mathbb{A}},\mathbb{A})
\end{equation*}%
and we have%
\begin{equation*}
\Lambda ^{0}(M^{\mathbb{A}},\mathbb{A})=C^{\infty }(M^{\mathbb{A}},\mathbb{A}%
)\text{.}
\end{equation*}

If $\omega $ is a differential form of degree $p$ on $M$, then there exists
an unique differential $A$-form of degree $p$ on $M^{A}$\ such that 
\begin{equation*}
\omega ^{\mathbb{A}}(\theta _{1}^{\mathbb{A}},\theta _{2}^{\mathbb{A}%
},...,\theta _{p}^{\mathbb{A}})=\left[ \omega (\theta _{1},\theta
_{2},...,\theta _{p})\right] ^{\mathbb{A}}
\end{equation*}%
for any vector fields $\theta _{1},\theta _{2},...,\theta _{p}\ \in 
\mathfrak{X}($ $M)$. \ We say that the differential $\mathbb{A}$-form $%
\omega ^{\mathbb{A}}$ is the prolongation to $M^{\mathbb{A}}$ of the
differential form $\omega $ \cite{mor}.

When 
\begin{equation*}
d:\Lambda (M)\longrightarrow \Lambda (M)
\end{equation*}%
is the exterior differential operator on $M$, we denote 
\begin{equation*}
d^{\mathbb{A}}:\Lambda _{\mathbb{A}}(M^{\mathbb{A}})\longrightarrow \Lambda
_{\mathbb{A}}(M^{\mathbb{A}})
\end{equation*}%
the cohomology operator associated to the representation 
\begin{equation*}
\mathfrak{X}(M^{\mathbb{A}})\longrightarrow Der_{\mathbb{A}}\left[ C^{\infty
}(M^{\mathbb{A}},\mathbb{A})\right] ,X\longmapsto X\text{.}
\end{equation*}

Thus, For $\eta \in \Lambda _{\mathbb{A}}^{p}(M^{\mathbb{A}})$ and for $%
X_{1},X_{2},...,X_{p+1}\in \mathfrak{X}(M^{\mathbb{A}})$, we have%
\begin{align*}
d^{\mathbb{A}}\eta (X_{1},X_{2},...,X_{p+1})&
=\sum_{i=1}^{p+1}(-1)^{i-1}X_{i}[\eta (X_{1},...,\widehat{X_{i}},...,X_{p+1}]
\\
& +\sum_{1\leq i<j\leq p+1}(-1)^{i+j}\eta (\left[ X_{i},X_{j}\right]
,X_{1},...,\widehat{X_{i}},...,\widehat{X_{j}},...,X_{p+1})
\end{align*}%
For $p=1,$ we have%
\begin{equation*}
d^{\mathbb{A}}\eta (X,Y)=X_{1}\left( \eta (X_{2})\right) -X_{2}\left( \eta
(X_{1})\right) -\eta (\left[ X_{1},X_{2}\right] )\text{.}
\end{equation*}

\bigskip

In the continuation, we define the concept of Lie-Rinehart structure on $M^{%
\mathbb{A}}$. Its anchor map is with values in the $C^{\infty }(M^{\mathbb{A}%
},\mathbb{A})$-module of first differential operators of $C^{\infty }(M^{%
\mathbb{A}},\mathbb{A})$ which are which are $\mathbb{A}$-linears.

\section{$\ $Lie-Rinehart structure on Weil bundle}

\subsection{Differential operator on $M^{\mathbb{A}}$}

A differential operator on $M^{\mathbb{A}}$, is a $%
\mathbb{R}
$-linear map%
\begin{equation*}
\delta :C^{\infty }(M^{\mathbb{A}})\longrightarrow \ C^{\infty }(M^{\mathbb{A%
}})\ 
\end{equation*}%
such that for any $\varphi ,\psi \in C^{\infty }(M^{\mathbb{A}})$,%
\begin{equation*}
\delta (\varphi \cdot \psi )=\delta (\varphi )\cdot \psi +\varphi \cdot
\delta (\psi )-\varphi \cdot \psi \cdot \delta (1_{C^{\infty }(M^{\mathbb{A}%
})})\text{.}
\end{equation*}%
We denote $\mathcal{D(}M^{\mathbb{A}})$ the $C^{\infty }(M^{\mathbb{A}})$%
-module of differential operators on $M^{\mathbb{A}}$. We have

\bigskip

\begin{theorem}
The following assertions are equivalent:

\begin{enumerate}
\item A differential operator on $M^{\mathbb{A}}$ is a differential operator
of $C^{\infty }(M^{\mathbb{A}})$;

\item A differential operator on $M^{\mathbb{A}}$ is a linear map $%
Y:C^{\infty }(M)\longrightarrow C^{\infty }(M^{\mathbb{A}},\mathbb{A})$ such
that 
\begin{equation*}
Y(f\cdot g)=Y(f)\cdot g^{\mathbb{A}}+f^{\mathbb{A}}\cdot Y(g)-f^{\mathbb{A}%
}\cdot g^{\mathbb{A}}\cdot Y(1_{C^{\infty }(M)}),\quad \text{for any}%
\,f,g\in C^{\infty }(M)\text{;}
\end{equation*}

\item A differential operator on $M^{\mathbb{A}}$ is a differential operator
of $C^{\infty }(M^{\mathbb{A}},\mathbb{A})$ which is $A$-linear.
\end{enumerate}
\end{theorem}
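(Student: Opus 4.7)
The governing idea is to reduce each of the three equivalences to the already-established equivalence between the four descriptions of a vector field on $M^{\mathbb{A}}$. I will exploit the standard decomposition: any first order differential operator $\delta$ on a commutative unital algebra splits as $\delta = D + \mu_{\delta(1)}$, where $\mu_{a}$ denotes multiplication by $a$ and $D=\delta-\mu_{\delta(1)}$ is a derivation; this was recorded in Section~1.1. Since multiplication by a scalar or by an element of the algebra is both $\mathbb{R}$-linear and (over a commutative algebra) $\mathbb{A}$-linear whenever it makes sense, adjoining $\mu_{\delta(1)}$ preserves all the linearity properties under consideration, so only the derivation part requires genuine analysis, and that part is covered by the recalled theorem.

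\textbf{Step 1: $(1)\Leftrightarrow(3)$.} Starting from $\delta$ in sense (1), I would set $D=\delta-\mu_{\delta(1)}$, a derivation of $C^{\infty}(M^{\mathbb{A}})$. By the equivalence $(2)\Leftrightarrow(4)$ of the vector-field theorem, $D$ extends uniquely to an $\mathbb{A}$-linear derivation $\widetilde{D}$ of $C^{\infty}(M^{\mathbb{A}},\mathbb{A})$. The element $\delta(1_{C^{\infty}(M^{\mathbb{A}})})$ lies in $C^{\infty}(M^{\mathbb{A}})\subset C^{\infty}(M^{\mathbb{A}},\mathbb{A})$, and the corresponding multiplication operator $\mu_{\delta(1)}$ on $C^{\infty}(M^{\mathbb{A}},\mathbb{A})$ is $\mathbb{A}$-linear by commutativity. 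Thus $\widetilde{\delta}:=\widetilde{D}+\mu_{\delta(1)}$ is an $\mathbb{A}$-linear first order differential operator of $C^{\infty}(M^{\mathbb{A}},\mathbb{A})$ whose restriction to $C^{\infty}(M^{\mathbb{A}})$ returns $\delta$. Conversely, any $\mathbb{A}$-linear first order differential operator of $C^{\infty}(M^{\mathbb{A}},\mathbb{A})$ restricts to one in sense (1); uniqueness of the extension guarantees these assignments are mutually inverse.

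\textbf{Step 2: $(3)\Leftrightarrow(2)$.} Given $\widetilde{\delta}$ in sense (3), define $Y:C^{\infty}(M)\to C^{\infty}(M^{\mathbb{A}},\mathbb{A})$ by $Y(f)=\widetilde{\delta}(f^{\mathbb{A}})$. The twisted Leibniz rule of (2) follows immediately from $(fg)^{\mathbb{A}}=f^{\mathbb{A}}g^{\mathbb{A}}$, $1_{C^{\infty}(M)}^{\mathbb{A}}=1_{C^{\infty}(M^{\mathbb{A}},\mathbb{A})}$, and the twisted Leibniz rule for $\widetilde{\delta}$. Conversely, given $Y$ as in (2), the map $X:f\mapsto Y(f)-f^{\mathbb{A}}\cdot Y(1_{C^{\infty}(M)})$ satisfies the derivation-type identity of sense (3) of the vector-field theorem, hence extends uniquely to an $\mathbb{A}$-linear derivation $\widetilde{X}$ of $C^{\infty}(M^{\mathbb{A}},\mathbb{A})$; then $\widetilde{\delta}:=\widetilde{X}+\mu_{Y(1)}$ supplies the desired element of sense (3). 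Checking compatibility—that going $(1)\to(3)\to(2)$ agrees with a direct passage $(1)\to(2)$—is a matter of tracing definitions.

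\textbf{Main obstacle.} The genuine content is hidden in the extension and uniqueness step that promotes a derivation $D$ of $C^{\infty}(M^{\mathbb{A}})$ (or a map $X$ defined only on prolongations $f^{\mathbb{A}}$) to an $\mathbb{A}$-linear derivation of $C^{\infty}(M^{\mathbb{A}},\mathbb{A})$. This is already supplied by the vector-field equivalence recalled above, so the present theorem itself offers no new difficulty beyond carefully verifying that all the added multiplication pieces $\mu_{\delta(1)}$ and $\mu_{Y(1)}$ preserve $\mathbb{A}$-linearity and the twisted Leibniz identity; this is a routine bookkeeping exercise once the decomposition principle is in place.
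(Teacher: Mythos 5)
Your proposal is correct in substance and runs on exactly the same engine as the paper's proof: the decomposition $\delta = D + \mu_{\delta(1)}$ of a first order differential operator into a derivation plus multiplication by $\delta(1)$, followed by the unique $\mathbb{A}$-linear extension of derivations furnished by the vector-field equivalences of \cite{bo}. The only organisational difference is which pair of implications is treated directly: the paper cites $(1)\Leftrightarrow(2)$ from \cite{bos1} and proves $(2)\Leftrightarrow(3)$ by the decomposition argument, whereas you prove $(1)\Leftrightarrow(3)$ and $(3)\Leftrightarrow(2)$, which has the merit of being self-contained (no appeal to \cite{bos1}). One caveat on your Step 1: the converse assertion that an arbitrary $\mathbb{A}$-linear first order differential operator $\widetilde{\delta}$ of $C^{\infty}(M^{\mathbb{A}},\mathbb{A})$ ``restricts'' to an operator of $C^{\infty}(M^{\mathbb{A}})$ is not literally true, because $\widetilde{\delta}(1)$ is a general element of $C^{\infty}(M^{\mathbb{A}},\mathbb{A})$ and need not lie in $C^{\infty}(M^{\mathbb{A}})$; for instance, multiplication by a constant $a\in\mathfrak{m}$ is of type $(3)$ but does not preserve the subalgebra $C^{\infty}(M^{\mathbb{A}})$. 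The paper sidesteps this by always passing from $(3)$ to $(2)$, whose codomain is $C^{\infty}(M^{\mathbb{A}},\mathbb{A})$; strictly speaking the correspondence $(1)\leftrightarrow(3)$ identifies operators of type $(1)$ only with those of type $(3)$ whose value at the unit lies in $C^{\infty}(M^{\mathbb{A}})$ --- a looseness already present in the theorem's own formulation. Apart from this point, your treatment of the derivation part coincides with the paper's, and the bookkeeping you defer is indeed routine.
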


\bigskip

\begin{proof}
The implications $1.\Longleftrightarrow 2$ are shown in \cite{bos1}.

$2.\Longleftrightarrow 3.$

If $X:C^{\infty }(M)\longrightarrow C^{\infty }(M^{\mathbb{A}},\mathbb{A})$
is a differential operator, such that 
\begin{equation*}
X(f\cdot g)=X(f)\cdot g^{\mathbb{A}}+f^{\mathbb{A}}\cdot X(g)-f^{\mathbb{A}%
}\cdot g^{\mathbb{A}}\cdot X(1_{C^{\infty }(M)}),\quad \text{for any}%
\,f,g\in C^{\infty }(M)\text{,}
\end{equation*}%
the map%
\begin{equation*}
Y:C^{\infty }(M)\longrightarrow C^{\infty }(M^{\mathbb{A}},\mathbb{A}%
),f\longmapsto X(f)-f^{\mathbb{A}}\cdot X(1_{C^{\infty }(M)}),
\end{equation*}%
is a derivation. Given \cite{bo}, there exist a unique derivation%
\begin{equation*}
\widetilde{Y}:C^{\infty }(M^{\mathbb{A}},\mathbb{A})\longrightarrow
C^{\infty }(M^{\mathbb{A}},\mathbb{A})
\end{equation*}%
such that:

$1$- $\widetilde{Y}$ is $A$-linear;

$2$- $\widetilde{Y}(\sigma )\in C^{\infty }(M^{\mathbb{A}})$ for all $\sigma
\in C^{\infty }(M^{\mathbb{A}})$;

$3$- $\widetilde{Y}(f^{\mathbb{A}})=Y(f)$ for every $f\in C^{\infty }(M)$.

The map 
\begin{equation*}
\widetilde{X}:C^{\infty }(M^{\mathbb{A}},\mathbb{A})\longrightarrow
C^{\infty }(M^{\mathbb{A}},\mathbb{A}),\varphi \longmapsto \widetilde{Y}%
(\varphi )+\varphi \cdot X(1_{C^{\infty }(M)}),
\end{equation*}%
is such that :

$1$- $\widetilde{X}$ is $\mathbb{A}$-linear;

$2$- For any $\varphi \in C^{\infty }(M^{\mathbb{A}})$, $\left[ \widetilde{X}%
(\varphi )-\varphi \cdot X(1_{C^{\infty }(M)})\right] \in C^{\infty }(M^{%
\mathbb{A}})$;

$3$- $\widetilde{X}(f^{\mathbb{A}})=X(f)$ for every $f\in C^{\infty }(M)$.

Conversely, if $X:C^{\infty }(M^{\mathbb{A}},\mathbb{A})\longrightarrow
C^{\infty }(M^{\mathbb{A}},\mathbb{A})$ is a differential operator which is $%
\mathbb{A}$-linear, the map%
\begin{equation*}
Z:C^{\infty }(M^{\mathbb{A}},\mathbb{A})\longrightarrow C^{\infty }(M^{%
\mathbb{A}},\mathbb{A}),\varphi \longmapsto X(\varphi )-\varphi \cdot
X(1_{C^{\infty }(M)})\text{,}
\end{equation*}%
is a derivation.

Thus, the map 
\begin{equation*}
Y:C^{\infty }(M)\longrightarrow C^{\infty }(M^{\mathbb{A}},\mathbb{A}%
),f\longmapsto Z(f^{\mathbb{A}})\text{,}
\end{equation*}%
is a a differential operator such that%
\begin{equation*}
Y(f\cdot g)=Y(f)\cdot g^{\mathbb{A}}+f^{\mathbb{A}}\cdot Y(g)-f^{\mathbb{A}%
}\cdot g^{\mathbb{A}}\cdot Y(1_{C^{\infty }(M)}),\quad \text{for any}%
\,f,g\in C^{\infty }(M)\text{.}
\end{equation*}%
That ends the proof.
\end{proof}

We denotes $\mathcal{D}_{A}(M^{A})$, the $C^{\infty }(M^{\mathbb{A}},\mathbb{%
A})$-module of $\mathbb{A}$-linear differential operator%
\begin{equation*}
X:C^{\infty }(M^{\mathbb{A}},\mathbb{A})\longrightarrow C^{\infty }(M^{%
\mathbb{A}},\mathbb{A})
\end{equation*}%
such that 
\begin{equation*}
X(\varphi \cdot \psi )=X(\varphi )\cdot \psi +\varphi \cdot X(\psi )-\varphi
\cdot \psi \cdot X(1_{C^{\infty }(M^{\mathbb{A}},\mathbb{A})}),\quad \text{%
for any}\,\varphi ,\psi \in C^{\infty }(M^{\mathbb{A}},\mathbb{A})\text{.}
\end{equation*}

\bigskip

\begin{theorem}
The application%
\begin{equation*}
\left[ ,\right] :\mathcal{D}_{\mathbb{A}}(M^{\mathbb{A}})\times \mathcal{D}_{%
\mathbb{A}}(M^{\mathbb{A}})\longrightarrow \mathcal{D}_{\mathbb{A}}(M^{%
\mathbb{A}}),(X,Y)\longmapsto X\circ Y-Y\circ X,
\end{equation*}%
is skew-symmetric $\mathbb{A}$-bilinear and difine a structure of $\mathbb{A}
$-Lie algebra on $\mathcal{D}_{\mathbb{A}}(M^{\mathbb{A}})$.

Furthermore, for $\varphi \in C^{\infty }(M^{\mathbb{A}},\mathbb{A})$ and
for $X,Y\in \mathcal{D}_{\mathbb{A}}(M^{\mathbb{A}})$, we have 
\begin{equation*}
\left[ X,\varphi \cdot Y\right] =\left[ X(\varphi )-\varphi \cdot
X(1_{C^{\infty }(M^{A},A)})\right] \cdot Y+\varphi \cdot \left[ X,Y\right] 
\text{.}
\end{equation*}
\end{theorem}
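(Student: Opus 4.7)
My plan is to exploit the decomposition of a first order differential operator into a derivation plus a multiplication. For $X \in \mathcal{D}_{\mathbb{A}}(M^{\mathbb{A}})$, set $D_X(\varphi)=X(\varphi)-\varphi\cdot X(1)$; by the remark made in the first section, $D_X$ is an $\mathbb{A}$-linear derivation of $C^{\infty}(M^{\mathbb{A}},\mathbb{A})$, so $X = D_X + m_{X(1)}$, where $m_a$ denotes multiplication by $a\in C^{\infty}(M^{\mathbb{A}},\mathbb{A})$. This reduces everything to a short computation with derivations and multiplications.

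First, I would verify that the bracket is well defined, i.e.\ that $[X,Y]=X\circ Y-Y\circ X$ lies in $\mathcal{D}_{\mathbb{A}}(M^{\mathbb{A}})$. The $\mathbb{A}$-linearity is immediate since $X$ and $Y$ are $\mathbb{A}$-linear. For the first-order property, I would expand $[X,Y]=[D_X+m_{X(1)},D_Y+m_{Y(1)}]$, use $[m_a,m_b]=0$ by commutativity of $C^{\infty}(M^{\mathbb{A}},\mathbb{A})$, and the identity $[D,m_c]=m_{D(c)}$ valid for any derivation $D$. This yields
\begin{equation*}
[X,Y]=[D_X,D_Y]+m_{D_X(Y(1))-D_Y(X(1))}.
\end{equation*}
Since $[D_X,D_Y]$ is a derivation (standard fact, already recalled in Theorem~1 for vector fields on $M^{\mathbb{A}}$) and since $D_X(Y(1))-D_Y(X(1))=[X,Y](1)$ (a direct check: the multiplication terms $X(1)Y(1)$ and $Y(1)X(1)$ cancel by commutativity), we see that $[X,Y]=D_{[X,Y]}+m_{[X,Y](1)}$, which is the desired decomposition of a first order differential operator.

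Next, skew-symmetry is built into the definition, and the Jacobi identity is the general fact that the commutator of $\mathbb{A}$-linear endomorphisms of an $\mathbb{A}$-module satisfies Jacobi, which follows by a direct expansion; combined with $\mathbb{A}$-bilinearity this gives the structure of an $\mathbb{A}$-Lie algebra on $\mathcal{D}_{\mathbb{A}}(M^{\mathbb{A}})$.

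Finally, for the Leibniz-type formula $[X,\varphi Y]=\bigl(X(\varphi)-\varphi\cdot X(1_{C^{\infty}(M^{\mathbb{A}},\mathbb{A})})\bigr)\cdot Y+\varphi\cdot[X,Y]$, I would evaluate both sides on an arbitrary $\alpha\in C^{\infty}(M^{\mathbb{A}},\mathbb{A})$. The key step is to apply the first-order property of $X$ to the product $\varphi\cdot Y(\alpha)$:
\begin{equation*}
X(\varphi\cdot Y(\alpha))=X(\varphi)\cdot Y(\alpha)+\varphi\cdot X(Y(\alpha))-\varphi\cdot Y(\alpha)\cdot X(1),
\end{equation*}
then subtract $(\varphi Y)(X(\alpha))=\varphi\cdot Y(X(\alpha))$ and regroup. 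The term $\varphi\cdot(X(Y(\alpha))-Y(X(\alpha)))$ gives $\varphi\cdot[X,Y](\alpha)$, and the remaining part collapses to $\bigl(X(\varphi)-\varphi\cdot X(1)\bigr)\cdot Y(\alpha)$. I expect the main source of confusion to be bookkeeping of the $X(1)$-correction terms when applying the first-order rule; the derivation/multiplication decomposition above is precisely what makes these terms line up automatically.
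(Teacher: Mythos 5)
Your proof is correct. For the Leibniz-type formula $\left[X,\varphi Y\right]=\left(X(\varphi)-\varphi\cdot X(1)\right)\cdot Y+\varphi\cdot\left[X,Y\right]$ you do exactly what the paper does: evaluate on a test function, apply the first-order rule of $X$ to the product $\varphi\cdot Y(\alpha)$, subtract $\varphi\cdot Y(X(\alpha))$, and regroup. Where you genuinely diverge is in showing that $\mathcal{D}_{\mathbb{A}}(M^{\mathbb{A}})$ is closed under the commutator: the paper simply writes ``we verify that'' the first-order identity holds for $\left[X,Y\right]$ without displaying any computation, whereas your decomposition $X=D_X+m_{X(1)}$, combined with $\left[m_a,m_b\right]=0$ and $\left[D,m_c\right]=m_{D(c)}$, gives a short, fully checkable argument and moreover identifies the derivation part of $\left[X,Y\right]$ as $\left[D_X,D_Y\right]$, with multiplication part $m_{[X,Y](1)}$. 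That is a real gain over the paper's treatment of this step. The only point you pass over quickly is the $\mathbb{A}$-bilinearity of the bracket itself (i.e.\ $\left[X,a\cdot Y\right]=a\cdot\left[X,Y\right]$ for $a\in\mathbb{A}$), which the paper verifies explicitly; it is immediate from the $\mathbb{A}$-linearity of $X$ and $Y$, so nothing essential is missing, but you should state the one-line computation rather than merely invoke it.
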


\bigskip

\begin{proof}
For $X,Y\in \mathcal{D}_{\mathbb{A}}(M^{\mathbb{A}})$, the map 
\begin{equation*}
\left[ ,\right] :\mathcal{D}_{\mathbb{A}}(M^{\mathbb{A}})\times \mathcal{D}_{%
\mathbb{A}}(M^{\mathbb{A}})\longrightarrow \mathcal{D}_{\mathbb{A}}(M^{%
\mathbb{A}}),(X,Y)\longmapsto X\circ Y-Y\circ X,
\end{equation*}%
is manifestly skew-symmetric $%
\mathbb{R}
$-bilinear. For $\varphi $ and $\psi $ belonging to $C^{\infty }(M^{\mathbb{A%
}},\mathbb{A})$, we verify that 
\begin{equation*}
\left[ X,Y\right] (\varphi \cdot \psi )=\left[ X,Y\right] (\varphi )\cdot
\psi +\varphi \cdot \left[ X,Y\right] (\psi )-\varphi \cdot \psi \cdot \left[
X,Y\right] (1_{C^{\infty }(M^{\mathbb{A}},\mathbb{A})})
\end{equation*}%
Thus $\left[ X,Y\right] \in \mathcal{D}_{\mathbb{A}}(M^{\mathbb{A}})$.

For $a\in \mathbb{A}$ and for $\varphi $ $\in $ $C^{\infty }(M^{\mathbb{A}},%
\mathbb{A})$, we have: 
\begin{align*}
\left[ X,a\cdot Y\right] (\varphi )& =X\left[ a\cdot Y(\varphi )\right]
-\left( a\cdot Y\right) \left[ X(\varphi \ )\right]  \\
& =a\cdot X\left[ Y(\varphi \ )\right] -a\cdot Y\left[ X(\varphi \ )\right] 
\\
& =a\cdot \left[ X,Y\right] (\varphi \ )\text{.}
\end{align*}%
Thus%
\begin{equation*}
\left[ X,a\cdot Y\right] =a\cdot \left[ X,Y\right] 
\end{equation*}%
i.e the map%
\begin{equation*}
\left[ ,\right] :\mathcal{D}_{\mathbb{A}}(M^{\mathbb{A}})\times \mathcal{D}_{%
\mathbb{A}}(M^{\mathbb{A}})\longrightarrow \mathcal{D}_{\mathbb{A}}(M^{%
\mathbb{A}}),(X,Y)\longmapsto X\circ Y-Y\circ X,
\end{equation*}%
is $\mathbb{A}$-bilinear and for $X,Y\in \mathcal{D}_{\mathbb{A}}(M^{\mathbb{%
A}})$ and for $\varphi \in $ $C^{\infty }(M^{\mathbb{A}},\mathbb{A})$, the
map 
\begin{equation*}
C^{\infty }(M^{\mathbb{A}},\mathbb{A})\longrightarrow C^{\infty }(M^{\mathbb{%
A}},\mathbb{A}),\varphi \longmapsto \lbrack X,Y](\varphi )=X\left[ Y(\varphi
)\right] -Y\left[ X(\varphi )\right] 
\end{equation*}%
is $\mathbb{A}$-linear.

Futhermore, for $\varphi \in C^{\infty }(M^{\mathbb{A}},\mathbb{A})$ and for 
$\psi \in $ $C^{\infty }(M)$, we have%
\begin{align*}
\left[ X,\varphi \cdot Y\right] (\psi \ )& =X\left[ \varphi \cdot Y(\psi \ )%
\right] -\left( \varphi \cdot Y\right) \left[ X(\psi \ )\right]  \\
& =X(\varphi )\cdot Y(\psi \ )+\varphi \cdot X\left[ Y(f\psi )\right]
-\varphi \cdot Y(\psi \ )\cdot X(1_{C^{\infty }(M^{\mathbb{A}},\mathbb{A}%
)})-\varphi \cdot Y\left[ X(\psi \ )\right]  \\
& =X(\varphi )\cdot Y(\psi \ )-\varphi \cdot Y(\psi \ )\cdot X(1_{C^{\infty
}(M^{\mathbb{A}},\mathbb{A})})+\varphi \cdot X\left[ Y(\psi \ )\right]
-\varphi \cdot Y\left[ X(\psi \ )\right]  \\
& =X(\varphi )\cdot Y(f\psi )-\varphi \cdot Y(\psi \ )\cdot X(1_{C^{\infty
}(M^{\mathbb{A}},\mathbb{A})})+\varphi \cdot \left[ X,Y\right] (\psi \ ) \\
& =\left[ X(\varphi )-\varphi \cdot X(1_{C^{\infty }(M^{\mathbb{A}},\mathbb{A%
})})\right] \cdot Y(\psi \ )+\varphi \cdot \left[ X,Y\right] (\psi )\text{.}
\end{align*}%
Thus%
\begin{equation*}
\left[ X,\varphi \cdot Y\right] =\left[ X(\varphi )-\varphi \cdot
X(1_{C^{\infty }(M^{\mathbb{A}},\mathbb{A})})\right] \cdot Y+\varphi \cdot %
\left[ X,Y\right] 
\end{equation*}%
That ends the demonstration.
\end{proof}

\bigskip

\subsection{Lie-Rinehart structure on Weil bundle}

\bigskip Let $R$ be a unitary commutative algebra over a field $\mathbb{K}$
of characteristic zero with unit $1_{R}$ and let $\mathcal{G}$ be a $R$%
-module with a structure of $\mathbb{K}$-Lie algebra.

A Lie-Rinehart algebra structure on $\mathcal{G}$ is given by a morphism of $%
R$-modules and of $\mathbb{K}$-Lie algebras%
\begin{equation*}
\rho :\mathcal{G}\longrightarrow \mathcal{D}(R)
\end{equation*}%
called anchor map, such that for all $x$ and $y$ in $\mathcal{G}$ and for
all $a\in R$,%
\begin{equation*}
\lbrack x,ay]=\left( \rho (x)(a)-a\cdot \rho (x)(1_{R})\right) \cdot
y+a\cdot \lbrack x,y].
\end{equation*}

\bigskip Thus, in the case of Weil bundles, Where $R$ is the algebra $%
C^{\infty }(M^{\mathbb{A}},\mathbb{A})$ and $\mathcal{G}$ $=\mathfrak{X}(M^{%
\mathbb{A}})$,

A Lie-Rinehart structure on Weil bundle $M^{\mathbb{A}}$ is given by a
morphism of $C^{\infty }(M^{\mathbb{A}},\mathbb{A})$-modules and of Lie
algabras over $\mathbb{A}$,

\begin{equation*}
\rho :\mathfrak{X}(M^{A})\longrightarrow \mathcal{D}_{A}
\end{equation*}%
which is $A$-linear and such that for $X,Y\in \mathfrak{X}(M^{\mathbb{A}})$
and for any $\varphi \in C^{\infty }(M^{\mathbb{A}},\mathbb{A})$, we get 
\begin{equation*}
\lbrack X,\varphi \cdot Y]=\left[ \rho (X)(\varphi )-\varphi \cdot \rho
(X)(1_{C^{\infty }(M^{\mathbb{A}},\mathbb{A})})\right] \ \cdot Y+\varphi
\cdot \lbrack X,Y]\text{.}
\end{equation*}

We say that the pair $(\mathfrak{X}(M^{\mathbb{A}}),\rho )$ is a
Lie-Rinehart structure on $M^{\mathbb{A}}$.

The anchor $\rho $ is a represention from $\mathfrak{X}(M^{\mathbb{A}})\ $%
into $C^{\infty }(M^{\mathbb{A}},\mathbb{A})$.

\bigskip

For any $p\in \mathbb{N}$,%
\begin{equation}
\Lambda ^{p}(M^{\mathbb{A}},\rho )=\mathcal{L}_{sks}^{p}[\mathfrak{X}(M^{%
\mathbb{A}}),C^{\infty }(M^{\mathbb{A}},\mathbb{A})]  \notag
\end{equation}%
denotes the $C^{\infty }(M^{\mathbb{A}},\mathbb{A})$-module of
skew-symmetric multilinear forms of degree $p$ from $\mathfrak{X}(M^{\mathbb{%
A}})$ into $C^{\infty }(M^{\mathbb{A}},\mathbb{A})$. We have 
\begin{equation}
\Lambda ^{0}(M^{\mathbb{A}},\rho )=C^{\infty }(M^{\mathbb{A}},\mathbb{A}). 
\notag
\end{equation}%
We denote%
\begin{equation}
\Lambda (M^{\mathbb{A}},\rho )=\bigoplus_{p=0}^{n}\Lambda ^{p}(M^{\mathbb{A}%
},\rho )\text{.}  \notag
\end{equation}

\bigskip

\begin{theorem}
\bigskip Let $(\mathfrak{X}(M^{\mathbb{A}}),\rho )$ be a Lie-Rinehart
structure on $M^{\mathbb{A}}$. There exists a $\mathbb{A}$-diff\'{e}rential
form of degrre $1$ on $M^{\mathbb{A}}$,%
\begin{equation*}
\alpha :\mathfrak{X}(M^{\mathbb{A}})\longrightarrow C^{\infty }(M^{\mathbb{A}%
},\mathbb{A})
\end{equation*}%
such that 
\begin{equation*}
\rho (X)(\varphi )=X(\varphi )+\varphi \cdot \alpha (X)\text{.}
\end{equation*}
\end{theorem}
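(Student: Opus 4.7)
The plan is to obtain $\alpha$ as the ``zeroth-order part'' of the first order differential operator $\rho(X)$, and to identify the ``derivation part'' of $\rho(X)$ with the vector field $X$ itself by comparing two expressions available for the bracket $[X,\varphi Y]$.

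First I would set
\begin{equation*}
\alpha(X):=\rho(X)(1_{C^{\infty}(M^{\mathbb{A}},\mathbb{A})}),
\end{equation*}
which lies in $C^{\infty}(M^{\mathbb{A}},\mathbb{A})$. Since $\rho$ is a morphism of $C^{\infty}(M^{\mathbb{A}},\mathbb{A})$-modules, one has $\alpha(\varphi X)=\rho(\varphi X)(1)=\varphi\cdot\rho(X)(1)=\varphi\cdot\alpha(X)$; additivity and $\mathbb{A}$-linearity of $\alpha$ follow from the same property of $\rho$. Hence $\alpha$ is $C^{\infty}(M^{\mathbb{A}},\mathbb{A})$-linear and thus belongs to $\Lambda^{1}(M^{\mathbb{A}},\rho)$, i.e.\ it is an $\mathbb{A}$-differential $1$-form on $M^{\mathbb{A}}$.

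For the defining identity, I would expand $[X,\varphi Y]$ in two different ways. The intrinsic bracket identity on $\mathfrak{X}(M^{\mathbb{A}})$ recalled earlier (the formula $[X,\varphi Y]=X(\varphi)\cdot Y+\varphi\cdot[X,Y]$) gives one expression, while the Lie-Rinehart axiom gives $[X,\varphi Y]=\bigl(\rho(X)(\varphi)-\varphi\cdot\alpha(X)\bigr)\cdot Y+\varphi\cdot[X,Y]$. Subtracting yields
\begin{equation*}
\bigl(\rho(X)(\varphi)-X(\varphi)-\varphi\cdot\alpha(X)\bigr)\cdot Y=0
\end{equation*}
in $\mathfrak{X}(M^{\mathbb{A}})$, for every $Y\in\mathfrak{X}(M^{\mathbb{A}})$ and every $\varphi\in C^{\infty}(M^{\mathbb{A}},\mathbb{A})$.

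The remaining and only substantive step is to cancel $Y$ in the preceding identity. Because $M^{\mathbb{A}}$ is modeled on $\mathbb{A}^{n}$, the module $\mathfrak{X}(M^{\mathbb{A}})$ is locally free over $C^{\infty}(M^{\mathbb{A}},\mathbb{A})$ and admits local frames coming from $\mathbb{A}$-valued coordinates; taking $Y$ to be an element of such a frame around a given point forces the scalar coefficient to vanish on the chart, and a covering argument then yields the identity globally. One reads off $\rho(X)(\varphi)=X(\varphi)+\varphi\cdot\alpha(X)$, as desired. This cancellation is the main (modest) technical point; everything else is a direct application of the $C^{\infty}(M^{\mathbb{A}},\mathbb{A})$-linearity of $\rho$ and of the two Leibniz-type identities.
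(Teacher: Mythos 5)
Your proposal is correct and follows essentially the same route as the paper: define $\alpha(X)=\rho(X)(1_{C^{\infty}(M^{\mathbb{A}},\mathbb{A})})$ and equate the intrinsic bracket identity $[X,\varphi Y]=X(\varphi)\cdot Y+\varphi\cdot[X,Y]$ with the Lie-Rinehart axiom to read off $\rho(X)(\varphi)=X(\varphi)+\varphi\cdot\alpha(X)$. You are in fact slightly more careful than the paper, which simply asserts the cancellation of $Y$ and does not check the $C^{\infty}(M^{\mathbb{A}},\mathbb{A})$-linearity of $\alpha$; your local-frame argument and your use of the module-morphism property of $\rho$ fill in exactly those two omitted details.
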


\begin{proof}
We denote,%
\begin{equation*}
d_{\rho }:\Lambda (M^{\mathbb{A}},\rho )\longrightarrow \Lambda (M^{\mathbb{A%
}},\rho )
\end{equation*}%
the differential operator of degre $+1$ and of square $0$ associed to the
representation $\rho $.

For $X,Y\in \mathfrak{X}(M^{\mathbb{A}})$, we have (Theorem $1$) 
\begin{equation*}
\lbrack X,\varphi Y]=X(\varphi )\cdot Y+\varphi \cdot \lbrack X,Y]\text{.}
\end{equation*}%
Since $(\mathfrak{X}(M^{\mathbb{A}}),\rho )$ is a Lie-Rinehart structure on $%
M^{\mathbb{A}}$, then for $X,Y\in \mathfrak{X}(M^{\mathbb{A}})$ and for any $%
\varphi \in C^{\infty }(M^{\mathbb{A}},\mathbb{A})$,%
\begin{equation*}
\lbrack X,\varphi \cdot Y]=\left[ \rho (X)(\varphi )-\varphi \cdot \rho
(X)(1_{C^{\infty }(M^{\mathbb{A}},\mathbb{A})})\right] \ \cdot Y+\varphi
\cdot \lbrack X,Y]\text{.}
\end{equation*}

Thus,%
\begin{equation*}
X(\varphi )=\rho (X)(\varphi )-\varphi \cdot \rho (X)(1_{C^{\infty }(M^{%
\mathbb{A}},\mathbb{A})})\ 
\end{equation*}%
i.e%
\begin{equation*}
\rho (X)(\varphi )=X(\varphi )+\varphi \cdot \rho (X)(1_{C^{\infty }(M^{%
\mathbb{A}},\mathbb{A})})\text{.}
\end{equation*}

Then, the map%
\begin{equation*}
\alpha :\mathfrak{X}(M^{\mathbb{A}})\longrightarrow C^{\infty }(M^{\mathbb{A}%
},\mathbb{A}),X\longmapsto \alpha (X)=\rho (X)(1_{C^{\infty }(M^{\mathbb{A}},%
\mathbb{A})})
\end{equation*}%
is a $\mathbb{A}$-diff\'{e}rential form of degree $1$ on $M^{\mathbb{A}}$
that answers the question, i.e 
\begin{equation*}
\rho (X)(\varphi )=X(\varphi )+\varphi \cdot \alpha (X).
\end{equation*}
\end{proof}

\bigskip

\begin{proposition}
If $d^{\mathbb{A}}$ denote the differential operator of degre $+1$ and of
square $0$ associed to the representation $\mathfrak{X}(M^{\mathbb{A}%
})\longrightarrow Der_{A}(C^{\infty }(M^{\mathbb{A}},\mathbb{A}%
)),X\longmapsto X$, then the $A$-diff\'{e}rential $1$- form $\alpha $ is $d^{%
\mathbb{A}}$-closed i.e $d^{\mathbb{A}}\alpha =0$.
\end{proposition}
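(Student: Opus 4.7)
The plan is to evaluate $d^{\mathbb{A}}\alpha(X,Y)$ on a pair of vector fields $X,Y \in \mathfrak{X}(M^{\mathbb{A}})$ using the explicit formula for the differential of a $1$-form given earlier in the paper, namely
\begin{equation*}
d^{\mathbb{A}}\alpha(X,Y) = X(\alpha(Y)) - Y(\alpha(X)) - \alpha([X,Y]),
\end{equation*}
and show that it vanishes. The two ingredients are the definition $\alpha(X) = \rho(X)(1_{C^{\infty}(M^{\mathbb{A}},\mathbb{A})})$ together with the identity $\rho(X)(\varphi) = X(\varphi) + \varphi \cdot \alpha(X)$ from the preceding theorem, and the fact that $\rho$ is a morphism of $\mathbb{A}$-Lie algebras, i.e. $\rho([X,Y]) = [\rho(X),\rho(Y)] = \rho(X)\circ\rho(Y) - \rho(Y)\circ\rho(X)$.

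First I would evaluate the Lie-algebra morphism relation on the unit: applying $\rho([X,Y]) = \rho(X)\rho(Y) - \rho(Y)\rho(X)$ to $1_{C^{\infty}(M^{\mathbb{A}},\mathbb{A})}$ yields
\begin{equation*}
\alpha([X,Y]) = \rho(X)(\alpha(Y)) - \rho(Y)(\alpha(X)).
\end{equation*}
Next I would substitute $\rho(X)(\alpha(Y)) = X(\alpha(Y)) + \alpha(Y)\cdot\alpha(X)$ and $\rho(Y)(\alpha(X)) = Y(\alpha(X)) + \alpha(X)\cdot\alpha(Y)$ into this equation. Since $C^{\infty}(M^{\mathbb{A}},\mathbb{A})$ is commutative, the two product terms $\alpha(X)\cdot\alpha(Y)$ and $\alpha(Y)\cdot\alpha(X)$ cancel, leaving
\begin{equation*}
\alpha([X,Y]) = X(\alpha(Y)) - Y(\alpha(X)),
\end{equation*}
which is exactly the statement $d^{\mathbb{A}}\alpha(X,Y) = 0$.

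There is essentially no hard step: everything reduces to checking that $\rho$ respects brackets and that the quadratic terms in $\alpha$ cancel by commutativity of the structure algebra. The only point to be mindful of is to use the correct formula for $d^{\mathbb{A}}$ on a $1$-form (not on a general $p$-form), and to remember that in the formula from the text, the action of $X$ on $\alpha(Y) \in C^{\infty}(M^{\mathbb{A}},\mathbb{A})$ is the one given by $X$ viewed as a derivation, which is consistent with the representation $\mathfrak{X}(M^{\mathbb{A}}) \to \mathrm{Der}_{\mathbb{A}}[C^{\infty}(M^{\mathbb{A}},\mathbb{A})]$ defining $d^{\mathbb{A}}$.
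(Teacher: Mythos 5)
Your proof is correct and follows essentially the same route as the paper: both expand the Lie-algebra-morphism identity $\rho([X,Y])=[\rho(X),\rho(Y)]$ using $\rho(X)(\varphi)=X(\varphi)+\varphi\cdot\alpha(X)$ and let the quadratic terms $\alpha(X)\cdot\alpha(Y)$ cancel by commutativity. The only difference is that you evaluate at $\varphi=1_{C^{\infty}(M^{\mathbb{A}},\mathbb{A})}$ while the paper computes $([\rho(X),\rho(Y)]-\rho[X,Y])(\varphi)=\varphi\cdot[d^{\mathbb{A}}\alpha](X,Y)$ for arbitrary $\varphi$, which is the slightly stronger identity it reuses for the ``if and only if'' in the subsequent corollary.
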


\bigskip

\begin{proof}
For any $X,Y\in \mathfrak{X}(M^{\mathbb{A}})$ and\ for any $\varphi \in
C^{\infty }(M^{\mathbb{A}},\mathbb{A})$,%
\begin{eqnarray*}
([\rho (X),\rho (Y)]-\rho \lbrack X,Y])(\varphi ) &=&\rho (X)[\rho
(Y)(\varphi )]-\rho (Y)[\rho (X)(\varphi )]-\rho \lbrack X,Y])(\varphi ) \\
&=&\rho (X)[Y(\varphi )+\varphi \cdot \alpha (Y)]-\rho (Y)[X(\varphi
)+\varphi \cdot \alpha (X)] \\
&&-[X,Y](\varphi )-\varphi \cdot \alpha ([X,Y]) \\
&=&X[Y(\varphi )+\varphi \cdot \alpha (Y)]+[Y(\varphi )+\varphi \cdot \alpha
(Y)]\cdot \alpha (X) \\
&&-Y[X(\varphi )+\varphi \cdot \alpha (X)]-[X(\varphi )+\varphi \cdot \alpha
(X)]\cdot \alpha (Y) \\
&&-[X,Y](\varphi )-\varphi \cdot \alpha ([X,Y])\text{.}
\end{eqnarray*}%
i.e%
\begin{eqnarray*}
([\rho (X),\rho (Y)]-\rho \lbrack X,Y])(\varphi ) &=&X(Y(\varphi
))+X(\varphi \cdot \alpha (Y))+Y(\varphi )\cdot \alpha (X) \\
&&+\varphi \cdot \alpha (Y)\ \cdot \alpha (X)-Y(X(\varphi ))-Y(\varphi \cdot
\alpha (X))-X(\varphi )\cdot \alpha (Y) \\
&&-\varphi \cdot \alpha (X)\cdot \alpha (Y)-[X,Y](\varphi )-\varphi \cdot
\alpha ([X,Y]) \\
&=&X(Y(\varphi ))+X(\varphi )\cdot \alpha (Y)+\varphi \cdot X(\alpha
(Y))+Y(\varphi )\cdot \alpha (X) \\
&&+\varphi \cdot \alpha (Y)\ \cdot \alpha (X)-Y(X(\varphi ))-Y(\varphi
)\cdot \alpha (X)-\varphi \cdot Y(\alpha (X)) \\
&&-X(\varphi )\cdot \alpha (Y)-\varphi \cdot \alpha (X)\cdot \alpha
(Y)-[X,Y](\varphi )-\varphi \cdot \alpha ([X,Y]) \\
&=&\varphi \cdot (X(\ \alpha (Y))-Y(\ \alpha (X))-\alpha ([X,Y]) \\
&=&\ \varphi \cdot \lbrack d^{\mathbb{A}}\alpha ](X,Y)\text{.}
\end{eqnarray*}

Since $\varphi $ is morphism of $A$-Lie algabras, we have then $\ \rho
\lbrack X,Y])=[\rho (X),\rho (Y)]\ $\ if only if $[d^{\mathbb{A}}\alpha
](X,Y)=0.$

That ends the proof.
\end{proof}

The $1$-form $\alpha $ is the canonical form of the structure of
Lie-Rinehart algebra $(\mathfrak{X}(M^{\mathbb{A}}),\rho )$.

\begin{corollary}
Let $\alpha $ be a differential $\mathbb{A}$-form of degree $1$ on $M^{%
\mathbb{A}}$ and let a representation 
\begin{equation*}
\rho _{\alpha }:\mathfrak{X}(M^{\mathbb{A}})\longrightarrow \mathcal{D}_{%
\mathbb{A}}
\end{equation*}%
such that 
\begin{equation*}
\rho _{\alpha }(X)(\varphi )=X(\varphi )+\varphi \cdot \alpha (X)
\end{equation*}%
for any $\varphi \in C^{\infty }(M^{\mathbb{A}},\mathbb{A})$. The pair $(%
\mathfrak{X}(M^{\mathbb{A}}),\rho _{\alpha })$ is a $\mathbb{A}$%
-Lie-Rinehart structure if and only if $d^{\mathbb{A}}\alpha =0$.
\end{corollary}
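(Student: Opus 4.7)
The plan is to observe that the identity computed in the proof of the preceding Proposition is in fact a biconditional in disguise. That derivation nowhere invoked the Lie-morphism property of $\rho$: it used only the defining relation $\rho(X)(\varphi) = X(\varphi) + \varphi\cdot\alpha(X)$ together with $\alpha(X) = \rho(X)(1_{C^{\infty}(M^{\mathbb{A}},\mathbb{A})})$. For $\rho_\alpha$ the second relation is automatic, since $X(1)=0$ yields $\rho_\alpha(X)(1) = \alpha(X)$. Consequently the computation transfers verbatim to $\rho_\alpha$.

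First I would verify the axioms of a Lie-Rinehart structure that do not involve the Lie bracket: namely (i) that $\rho_\alpha(X) \in \mathcal{D}_{\mathbb{A}}(M^{\mathbb{A}})$ for every $X$; (ii) that $\rho_\alpha$ is $C^{\infty}(M^{\mathbb{A}},\mathbb{A})$-linear; and (iii) the anchor compatibility identity
\begin{equation*}
[X,\varphi\cdot Y] = \bigl[\rho_\alpha(X)(\varphi) - \varphi\cdot\rho_\alpha(X)(1)\bigr]\cdot Y + \varphi\cdot[X,Y]\text{.}
\end{equation*}
Item (i) reduces to a short check using only the derivation rule for $X$ and the fact that $\alpha(X) \in C^{\infty}(M^{\mathbb{A}},\mathbb{A})$; item (ii) is immediate from the $C^{\infty}(M^{\mathbb{A}},\mathbb{A})$-linearity of $\alpha$ and of $X \mapsto X$; item (iii) collapses, after substituting the definition of $\rho_\alpha$, to $[X,\varphi Y] = X(\varphi) Y + \varphi[X,Y]$, which is Theorem~1, since the bracketed coefficient on the right is exactly $X(\varphi)$. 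None of these three items involves $d^{\mathbb{A}}\alpha$, so they hold unconditionally.

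With (i)--(iii) in hand, the only remaining axiom to check is that $\rho_\alpha$ is a morphism of $\mathbb{A}$-Lie algebras. Transcribing the computation of the Proposition with $\rho_\alpha$ in place of $\rho$ yields
\begin{equation*}
\bigl([\rho_\alpha(X),\rho_\alpha(Y)] - \rho_\alpha[X,Y]\bigr)(\varphi) = \varphi\cdot d^{\mathbb{A}}\alpha(X,Y)
\end{equation*}
for every $X, Y \in \mathfrak{X}(M^{\mathbb{A}})$ and $\varphi \in C^{\infty}(M^{\mathbb{A}},\mathbb{A})$. If $d^{\mathbb{A}}\alpha = 0$, the right-hand side vanishes identically, so $\rho_\alpha$ is a Lie morphism and the Lie-Rinehart axioms are complete. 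Conversely, if $\rho_\alpha$ is a Lie morphism, evaluating at $\varphi = 1_{C^{\infty}(M^{\mathbb{A}},\mathbb{A})}$ forces $d^{\mathbb{A}}\alpha(X,Y) = 0$ for all $X, Y$, hence $d^{\mathbb{A}}\alpha = 0$.

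The main (rather modest) obstacle is simply the bookkeeping in (i)--(iii); the substantive content of the argument is the reinterpretation of the Proposition's identity as a two-way implication rather than a one-way one, and this requires no new computation.
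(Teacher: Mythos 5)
Your proposal is correct and follows essentially the route the paper intends: the corollary is left without an explicit proof there, being read off from the identity $([\rho(X),\rho(Y)]-\rho[X,Y])(\varphi)=\varphi\cdot d^{\mathbb{A}}\alpha(X,Y)$ established in the Proposition, which (as you rightly note) uses only the formula $\rho(X)(\varphi)=X(\varphi)+\varphi\cdot\alpha(X)$ and hence applies verbatim to $\rho_{\alpha}$. Your explicit verification of the non-bracket axioms (that $\rho_{\alpha}(X)\in\mathcal{D}_{\mathbb{A}}(M^{\mathbb{A}})$, the $C^{\infty}(M^{\mathbb{A}},\mathbb{A})$-linearity, and the anchor compatibility collapsing to Theorem~1) and the evaluation at $\varphi=1$ for the converse are exactly the bookkeeping the paper omits.
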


\bigskip\


\begin{thebibliography}{99}
\bibitem{bo} B.G.R. Bossoto, E. Okassa, \textit{Champs de vecteurs et Formes
diff\'{e}rentielles sur une vari\'{e}t\'{e} des points proches}, Archivum
Math. (Brno), 44 (2008) 159--171.

\bibitem{bos1} B.G.R. Bossoto, \textit{Structures de Jacobi sur une vari\'{e}%
t\'{e} des points proches}, Matematicki Vesnik, 62, 2 (2010), 155-167.

\bibitem{ko1} I. Kolar, \textit{Affine structure on Weil bundles,} Nagoya
Math. J. 158 (2000), 99-06.

\bibitem{kms} P. Kol\'{a}r, P. W. Michor and J. Slovak, \textit{Natural
Operations in Differential Geometry}, Springer-Verlag, Berlin, 1993.

\bibitem{mor} A. Morimoto, \textit{Prolongation of connections to bundles of
infinitely near points}, J.Diff.Geom., t.11, 1976, 479-498.

\bibitem{nbo} B.V. Nkou,~ B.G.R. Bossoto, ~ E. Okassa, \textit{New
properties of prolongations of Linear connections on Weil bundles}, to
appear.

\bibitem{ok1} E. Okassa, \textit{Prolongement des champs de vecteurs \`{a}
des vari\'{e}t\'{e}s des points proches}, Annales Facult\'{e} des Sciences
de Toulouse, Vol. VIII, 3, 1986-1987, 349-366.

\bibitem{ok2} E. Okassa, \textit{Alg\`{e}bres de Jacobi et Alg\`{e}bres de
Lie-Rinehart-Jacobi}, J.of pure and applied algebra, vol. 208, 3 (2007),
1071-1089.

\bibitem{shu} V.\ V.\ Shurygin, \textit{Smooth manifolds over local
algebras, }J. Math. Sciences, Vol. 108, 3 (2002), 249-294.

\bibitem{shu1} V.\ V.\ Shurygin, \textit{Some aspects of the theory of
manifolds over algebras and Weil bundles, }J. Math. Sciences, vol. 169, 3
(2010), 315-341\textit{.}

\bibitem{wei} A. Weil, \textit{Th\'{e}orie des points proches sur les vari%
\'{e}t\'{e}s diff\'{e}rentiables}, Colloque G\'{e}om. Diff. Strasbourg,
1953, 111-117.
\end{thebibliography}
\end{document}